\documentclass[11pt]{amsart}
\usepackage[T1]{fontenc}
\usepackage{lmodern}
\usepackage{amssymb,mathtools}
\usepackage[expansion=false]{microtype}
\usepackage[hidelinks]{hyperref}

\newtheorem{theorem}{Theorem}[section]
\newtheorem{lemma}[theorem]{Lemma}

\numberwithin{equation}{section}
\newcommand{\R}{\mathbb R}
\newcommand{\A}{\mathcal A}
\newcommand{\1}{\mathbf 1}
\newcommand{\norm}[1]{\left\lVert#1\right\rVert}
\DeclareMathOperator{\dist}{dist}

\title[Endpoint estimates for bilinear circular averages]
{Endpoint estimates for the bilinear circular averaging operator}
\author{Xuyi Wei}
\address{School of Mathematical Sciences, Beihang University, Beijing, China}
\email{weixuyi@buaa.edu.cn}
\date{}
\hypersetup{
 pdftitle={Endpoint estimates for the bilinear circular averaging operator},
 pdfauthor={Xuyi Wei}
}

\begin{document}
\begin{abstract}
We prove that the bilinear circular averaging operator is bounded from
$L^1(\R)\times L^1(\R)$ to $L^{1/2,\infty}(\R)$.
The corresponding strong estimate fails, even when the inputs are
nonnegative, smooth, and compactly supported. We also construct
nonnegative $f,g\in L^1(\R)$ for which the average does not belong to
$L^{1/2}(\R)$. The weak estimate is proved by a dyadic decomposition
of the inputs and a summation over interval pairs near each quadratic fold.
\end{abstract}
\maketitle

\section{Introduction}

For measurable functions $f,g$ on $\R$, let
\begin{equation}\label{eq:operator}
 \A(f,g)(x)=\int_0^{2\pi}
 f(x-\cos\theta)g(x-\sin\theta)\,d\theta.
\end{equation}
Here $d\theta$ is unnormalized arclength measure. Replacing it by
$d\theta/(2\pi)$ affects only the constants in the estimates.
For $0<p<\infty$, write
\[
 \norm{H}_{L^p}=\left(\int_{\R}|H(x)|^p\,dx\right)^{1/p},
 \qquad
 \norm{H}_{L^{p,\infty}}
 =\sup_{\lambda>0}\lambda
   |\{x:|H(x)|>\lambda\}|^{1/p}.
\]

Multilinear convolutions against spherical measure were studied by
Oberlin~\cite{Oberlin} and, at the endpoints, by Bak and
Shim~\cite{BS}. Shrivastava and Shuin~\cite{SS} characterized the
boundedness region in the Banach range. For related bounds for lacunary
bilinear spherical maximal operators, see~\cite{BF}.
Christ and Zhou~\cite[Lemma~2.2]{CZ} proved
\[
 \A:L^p(\R)\times L^q(\R)\longrightarrow L^{pq/(p+q)}(\R),
 \qquad 1<p,q\leq\infty,
\]
with the usual interpretation when an exponent is infinite.
For bilinear spherical averages in dimensions $d\geq2$, the strong
$L^1\times L^1\to L^{1/2}$ estimate was proved in~\cite{IPS}.
Borges~\cite[Question~5.2]{Borges} asks whether this estimate also holds
in dimension one. The following theorem answers this question negatively
and establishes the corresponding weak bound.

\begin{theorem}\label{thm:main}
There is an absolute constant $C$ such that, for all $f,g\in L^1(\R)$
and $\lambda>0$,
\begin{equation}\label{eq:weak-distribution}
 |\{x:\A(|f|,|g|)(x)>\lambda\}|
 \leq C\left(\frac{\norm{f}_1\norm{g}_1}{\lambda}\right)^{1/2}.
\end{equation}
In particular, the integral defining $\A(f,g)$ converges absolutely
almost everywhere, and
\begin{equation}\label{eq:weak-norm}
 \norm{\A(f,g)}_{L^{1/2,\infty}}
 \leq C^2\norm{f}_1\norm{g}_1.
\end{equation}
There are nonnegative compactly supported $f,g\in L^1(\R)$, both of
norm one, for which
\begin{equation}\label{eq:strong-infinite}
 \int_{\R}\A(f,g)(x)^{1/2}\,dx=\infty.
\end{equation}
Moreover, there are nonnegative $f_n,g_n\in C_c^\infty(\R)$ with
$\norm{f_n}_1=\norm{g_n}_1=1$ and
$\norm{\A(f_n,g_n)}_{L^{1/2}}\to\infty$.
\end{theorem}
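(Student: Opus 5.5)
The plan has three parts. First I reduce the weak estimate to unit-scale pieces and to large $\lambda$. Then I split each input by height, which leaves a single "big--big'' term carrying the fold. Finally I derive the negative statements from an explicit one-dimensional computation at the fold.

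\textbf{Reduction.} Since $\A$ is positive and bilinear, I will assume $f,g\ge0$. Cover $\R$ by unit intervals $I$ and write $f=\sum_I f_I$ and $g=\sum_J g_J$, with $a_I=\norm{f_I}_1$ and $b_J=\norm{g_J}_1$. The term $\A(f_I,g_J)$ vanishes unless $\dist(I,J)\le 2$, and it is supported within distance $2$ of $I$. Hence at each $x$ only $O(1)$ pairs contribute, and
\[
\{\A(f,g)>\lambda\}\subset\bigcup_{I\sim J}\{\A(f_I,g_J)>\lambda/C\}.
\]
It therefore suffices to prove the localized bound $|\{\A(f_I,g_J)>\lambda\}|\le C(a_Ib_J/\lambda)^{1/2}$. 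Summing this over neighbouring pairs gives $\sum(a_Ib_J)^{1/2}\le\sum(a_I+b_J)\lesssim\norm{f}_1+\norm{g}_1$. Using the bilinear scaling $f\mapsto tf$, $g\mapsto g/t$ to make $\norm{f}_1=\norm{g}_1$, this yields \eqref{eq:weak-distribution}. For the localized bound I normalize $a_I=b_J=1$. If $\lambda\le1$, the trivial bound by the $O(1)$ support length already suffices, so I may take $\lambda\ge1$.

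Two elementary facts will be used. The first is the Fubini bound
\[
\int\A(f,g)\,dx\le2\pi\min\bigl(\norm f_1\norm g_\infty,\ \norm f_\infty\norm g_1\bigr).
\]
The second is the sup bound $\A(f,g)\le2\pi\norm f_\infty\norm g_\infty$. With $\alpha=c\sqrt\lambda$, split $f=f_s+f_b$ and $g=g_s+g_b$, where $f_s=f\1_{f\le\alpha}$ and similarly for $g$. Then $\A(f_s,g_s)\le\lambda/3$. Each mixed term has $L^1$ norm at most $C\alpha$, so by Chebyshev its level set has measure $\lesssim\alpha/\lambda\sim\lambda^{-1/2}$. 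What remains is $\A(f_b,g_b)$, where $\norm{f_b}_1,\norm{g_b}_1\le1$ and the supports have measure at most $1/\alpha$.

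\textbf{Big--big term: the expected obstacle.} Parametrizing by $u=x-\cos\theta$ gives
\[
\A(f,g)(x)=\int f(u)\sum_\pm\frac{g\bigl(x\mp\sqrt{1-(x-u)^2}\bigr)}{\sqrt{1-(x-u)^2}}\,du .
\]
The map $x\mapsto x\mp\sqrt{1-(x-u)^2}$ has a quadratic fold at $x-u=\mp1/\sqrt2$, that is, where $|u-v|=\sqrt2$. Integrating in $x$ shows $\int\A(f,g)=\iint f(u)g(v)w(u-v)\,du\,dv$ with $w(t)=(2-t^2)_+^{-1/2}$, so $w\in L^{2,\infty}$. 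Away from the fold $w$ is bounded, and Chebyshev gives $\lambda^{-1}\le\lambda^{-1/2}$.

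Near the fold I decompose dyadically according to $|\,|u-v|-\sqrt2\,|\sim2^{-k}$, and I cut $f_b$ and $g_b$ into intervals of length $2^{-k}$. For a pair $(I,J)$ at fold distance $2^{-k}$, $\A(f_I,g_J)$ is spread over an $x$-interval of length $\sim2^{-k/2}$ with a pointwise bound of order $2^{k/2}\norm{f_I}_1\norm{g_J}_1$ times the local $L^\infty$ size. I will bound each level set by the minimum of this support length and the Chebyshev bound. The large-height truncation ($f_b,g_b$ live on sets of measure $\le\lambda^{-1/2}$) limits how many pairs at each scale can be dense. Summing over pairs and then over $k$ should give a geometric series of total size $\lambda^{-1/2}$, with the same AM--GM step as before. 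Making this count work is the main obstacle. If the direct count fails, I would first try interpolating the $L^1\times L^\infty$ and $L^\infty\times L^1$ bounds at each dyadic scale against the fold gain $2^{-k/2}$. Finally, \eqref{eq:weak-norm} follows from \eqref{eq:weak-distribution} applied to $|f|,|g|$, and a.e.\ finiteness follows by letting $\lambda\to\infty$.

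\textbf{Counterexamples.} With $f=\delta_0$ the formula above reads $\A(\delta_0,g)(x)=\sum_\pm g(\varphi_\pm(x))/\sqrt{1-x^2}$. Here $\varphi_\pm(x)=x\mp\sqrt{1-x^2}$ satisfies $\varphi(x)\approx v_0+c(x-x_0)^2$ near the fold. Consequently $\int\A(\delta_0,g)^{1/2}\,dx\gtrsim\int g(v)^{1/2}|v-v_0|^{-1/2}\,dv$. Take $g(v)=|v-v_0|^{-1}\log^{-2}(1/|v-v_0|)$ near $v_0$: it is integrable, yet this last integral diverges like $\int dt/(t\log(1/t))$. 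To replace $\delta_0$ by an $L^1$ function, I take $f(u)=|u|^{-1}\log^{-2}(1/|u|)$ near $0$ and use positivity. Averaging over $u$ with $|u|\sim2^{-j}$ preserves the fold picture at scales $|x-x_0|\gg2^{-j}$, so each dyadic shell in $u$ contributes a divergent tail. This gives \eqref{eq:strong-infinite} after normalizing both functions to norm one. For the smooth sequence, let $f_n,g_n$ be mollified truncations $\min(f,n)\1_{[-R,R]}$, renormalized. Then $\A(f_n,g_n)\to\A(f,g)$ a.e., and Fatou's lemma forces $\norm{\A(f_n,g_n)}_{L^{1/2}}\to\infty$.
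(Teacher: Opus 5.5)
\textbf{Gap 1: the weak estimate.} Your unit-scale reduction, the height splitting, and the good--good and mixed terms are fine. The big--big term near the fold, however, carries the whole content of \eqref{eq:weak-distribution}, and it is not proved: you outline a count that you have not carried out, and you say so.

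The truncation $f_b=f\1_{\{f>\alpha\}}$ also discards exactly the structure such a count needs. Nothing bounds $\norm{f_b\1_I}_1$ by $\alpha\ell(I)$ on your intervals of length $2^{-k}$, since $f_b$ may be a single spike of mass one. Bounding each pair by ``support length or Chebyshev'' does not sum over pairs either. The level set of a sum is not controlled by the level sets of its terms at the same height. And the union of the $x$-supports, each of length about $2^{-k/2}$, is in general much larger than $\lambda^{-1/2}$.

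The missing idea is a Calder\'on--Zygmund stopping-time decomposition at height $\alpha$, followed by excision of an exceptional set:
\begin{itemize}
\item Take maximal dyadic intervals $I$ with $\alpha\ell(I)<a_I=\int_If\le2\alpha\ell(I)$, and likewise intervals $J$ for $g$.
\item Remove $\Omega=\bigcup3I\cup\bigcup3J$, which satisfies $|\Omega|\lesssim\alpha^{-1}$.
\item In fold coordinates $u=x+\phi(t)$, $v=x+\chi(t)$, $v-u=\psi(t)\approx t^2$, take $x\notin\Omega$ with $u\in I$ and $v\in J$. Then $|u-x|\ge\ell(I)$, $|v-x|\ge\ell(J)$, and $\dist(I,J)\le\psi(t)$. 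Since $|\phi(t)|+|\chi(t)|\lesssim|t|$, this forces $|t|\gtrsim\max\{\ell(I),\ell(J),\dist(I,J)^{1/2}\}$.
\item So the Jacobian $1/|\psi'(t)|\approx1/|t|$ is bounded on the relevant set, and $\int_{\Omega^c}T(F_I,G_J)\lesssim a_Ib_J/\max\{\ell(I),\ell(J),\dist(I,J)^{1/2}\}$.
\item These bounds sum to $O(\alpha)$. Pairs with $\dist(I,J)<\max\{\ell(I),\ell(J)\}$ are handled by the packing bound $\sum_{J\subset5I}b_J\le10\alpha\ell(I)$. Separated pairs are handled by the kernel $|s|^{-1/2}\1_{\{|s|\lesssim1\}}$ acting on $\overline F$ and $\overline G$, with $\norm{\overline F}_1\le1$ and $\norm{\overline G}_\infty\le2\alpha$.
\end{itemize}
It is this excision, not the bound $|\{f>\alpha\}|\le\alpha^{-1}$, that neutralizes the $L^{2,\infty}$ singularity of your kernel $w$.

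\textbf{Gap 2: the counterexample fails as written.} Your $\delta_0$ computation is correct, but replacing $\delta_0$ by an $L^1$ function with a $\log^{-2}$ singularity destroys the divergence. In fold coordinates, let $F(y)\approx|y|^{-1}\log^{-p}(1/|y|)$ and $G(z)\approx|z|^{-1}\log^{-q}(1/|z|)$. Then $T(F,G)(x)\approx\int F(y)\,G\bigl(y+(y-x)^2/a\bigr)\,dy$. Set $|x|=r$ and $L=\log(1/r)$.
\begin{itemize}
\item The range $|y|\lesssim r^2$ contributes about $r^{-2}L^{-(p+q-1)}$. For instance, $G(r^2)\int_{|y|\lesssim r^2}F\approx r^{-2}L^{-q}L^{-(p-1)}$; the factor $L^{-(p-1)}$ is the cost of smearing $\delta_0$.
\item The range $|y|\gg r^2$ contributes at most about $r^{-2}L^{-(p+q)}$.
\end{itemize}
Hence $\A(f,g)^{1/2}\approx r^{-1}L^{-(p+q-1)/2}$ near the fold point. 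With $p=q=2$ this is integrable, so your $f,g$ do not give \eqref{eq:strong-infinite}.

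Your claim that each shell $|u|\sim2^{-j}$ yields a divergent tail is wrong. Such a shell reproduces the $\delta_0$ picture only where $|x-x_0|\gg2^{-j/2}$. On that range $\int dx/\bigl(|x-x_0|\log(1/|x-x_0|)\bigr)$ is finite, because the divergence sits at $x\to x_0$. Even divergent shell contributions could not simply be added, since $s\mapsto s^{1/2}$ is subadditive.

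What you need is $p+q\le3$ with $p,q>1$. The paper takes $p=q=3/2$, places the two singularities at fold separation $\sqrt2$, and obtains $\A(f,g)(-r)\gtrsim r^{-2}(\log(1/r))^{-2}$.

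Once a valid example is available, your Fatou argument for the smooth sequence is acceptable. The a.e.\ convergence of $\A(f_n,g_n)$ for mollified truncations does need justification, for example via the weak bound along subsequences. Monotone smooth cutoffs $h_n\uparrow h$, as in the paper, make it immediate.
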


The Jacobian of the map $(x,\theta)\mapsto
(x-\cos\theta,x-\sin\theta)$ vanishes precisely at
$\theta=3\pi/4$ and $7\pi/4$. Away from these angles, a change of
variables gives the strong $L^{1/2}$ estimate. At either angle, the
difference between the two coordinates has a nondegenerate critical
point. The local weak estimate for this geometry is proved in
Section~\ref{sec:fold} and applied to the circle in
Section~\ref{sec:weak}. The counterexample is given in
Section~\ref{sec:strong}.

Nonnegative integrals are understood in the extended Lebesgue sense.
We choose Borel representatives of $L^1$ functions; the averages are
independent of this choice almost everywhere. Indeed, if $E$ is a null
set and $b$ is a measurable function on a finite interval $J$, then
Tonelli's theorem gives
\[
 \int_{\R}\int_J\1_E(x+b(t))\,dt\,dx=|J|\,|E|=0.
\]
The letter $C$ denotes a finite constant whose value may change.
For an interval $I$, $\ell(I)$ is its length and $3I$ is the concentric
interval of three times its length.

\section{A weak estimate near a quadratic fold}\label{sec:fold}

\begin{lemma}\label{lem:fold}
Let $\delta>0$ and $\phi,\chi\in C^1([-\delta,\delta])$.
Put $\psi=\chi-\phi$. Suppose that there are constants $c_0,C_0>0$
such that, for $0<|t|\leq\delta$,
\begin{align}
 |\phi(t)|+|\chi(t)|&\leq C_0|t|,\label{eq:fold-linear}\\
 c_0t^2\leq\psi(t)&\leq C_0t^2,\label{eq:fold-quadratic}\\
 c_0|t|\leq|\psi'(t)|&\leq C_0|t|.\label{eq:fold-derivative}
\end{align}
Define, for nonnegative $F,G\in L^1(\R)$,
\[
 T(F,G)(x)=\int_{-\delta}^{\delta}
 F(x+\phi(t))G(x+\chi(t))\,dt.
\]
Then
\begin{equation}\label{eq:fold-weak}
 |\{x:T(F,G)(x)>\lambda\}|
 \leq C\left(\frac{\norm{F}_1\norm{G}_1}{\lambda}\right)^{1/2},
 \qquad\lambda>0,
\end{equation}
where $C$ depends only on $c_0,C_0,\delta$.
\end{lemma}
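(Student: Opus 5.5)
The plan is a Calder\'on--Zygmund type argument at height $\lambda^{1/2}$ on the input $G$, combined with a dyadic decomposition in $t$. Both sides of \eqref{eq:fold-weak} scale the same way under $F\mapsto aF$, $G\mapsto bG$, $\lambda\mapsto ab\lambda$. So I will normalize $\norm{F}_1=\norm{G}_1=1$ and aim to prove $|\{T(F,G)>\lambda\}|\le C\lambda^{-1/2}$.

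\textbf{Exceptional set.} Let $M$ be the Hardy--Littlewood maximal operator and $E=\{MG>\lambda^{1/2}\}$. By the weak $(1,1)$ bound, $|E|\le C\lambda^{-1/2}$. Write $E$ as a disjoint union of Whitney intervals $Q$ with $\ell(Q)\approx\dist(Q,E^c)$, and put $\ell(y)=\ell(Q)$ for $y\in Q$ and $\ell(y)=0$ for $y\notin E$. Decompose $[-\delta,\delta]$ dyadically as $|t|\sim2^{-k}$ and split
\[
T=T_{\mathrm{loc}}+T_{\mathrm{far}},
\]
according to whether $|t|\le A\,\ell(x+\phi(t))$ or not, where $A$ is a large constant.

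\textbf{Local part.} If $y=x+\phi(t)\in Q$ and $|t|\le A\ell(Q)$, then by \eqref{eq:fold-linear} we have $|x-y|\le C_0|t|\le C\ell(Q)$, so $x\in C'Q$. Hence $T_{\mathrm{loc}}$ is supported in $\bigcup_Q C'Q$, whose measure is at most $C|E|\le C\lambda^{-1/2}$. This part is therefore harmless.

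\textbf{Far part.} For $T_{\mathrm{far}}$ I will prove $\int T_{\mathrm{far}}\,dx\le C\lambda^{1/2}$; Chebyshev then gives the bound $C\lambda^{-1/2}$. Substituting $y=x+\phi(t)$ gives
\[
\int T_{\mathrm{far}}\,dx=\int F(y)\int_{|t|>A\ell(y)}G(y+\psi(t))\,dt\,dy .
\]
So it suffices to show that the inner integral is at most $C\lambda^{1/2}$ for every $y$. On the piece $|t|\sim2^{-k}$, hypotheses \eqref{eq:fold-quadratic} and \eqref{eq:fold-derivative} show that $t\mapsto\psi(t)$ is monotone on each sign-half. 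Its range there is an interval $I_k$ of length $\approx2^{-2k}$ contained in $[y,\,y+C2^{-2k}]$ after translation by $y$, and $dt=ds/|\psi'|\lesssim 2^{k}\,ds$. Hence the $k$-th piece is at most $C2^{k}\int_{y+I_k}G$.

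I will bound $\int_{y+I_k}G$ in two regimes.
\begin{itemize}
\item If $2^{-2k}\ge\ell(y)$, the interval $[y-C2^{-2k},\,y+C2^{-2k}]$ has length at least $\dist(y,E^c)$ up to constants. Enlarging it slightly, it contains a point of $E^c$, so $\int_{y+I_k}G\le C2^{-2k}\lambda^{1/2}$. The piece is then $\le C2^{-k}\lambda^{1/2}$, and the geometric sum over these $k$ is $\le C\lambda^{1/2}$ (using $2^{-k}\lesssim\delta$).
\item If $A\ell(y)<2^{-k}$ but $2^{-2k}<\ell(y)$, I instead bound $\int_{y+I_k}G$ by the $G$-mass of an interval of length $\approx\ell(y)$ around $y$ that reaches $E^c$. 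This gives $\int_{y+I_k}G\le C\ell(y)\lambda^{1/2}$, so the piece is $\le C2^{k}\ell(y)\lambda^{1/2}$. Summing over $2^{-k}\ge A\ell(y)$ gives $C\lambda^{1/2}$, again geometrically.
\end{itemize}
Together these give the claimed bound on the far part.

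\textbf{Expected obstacle.} The main danger is the mismatch between the two scales: the spatial spread is $|t|$ while the fold spread is $t^2$. A naive dyadic decomposition in $t$ that gives uniform $L^{1/2}$ bounds per scale loses a logarithm, and $L^{1/2,\infty}$ cannot absorb a sum over infinitely many scales. The proposal avoids this by choosing the cutoff between the local and far parts relative to the Whitney length $\ell(y)$ at the point $y$, not at $x$, so that both sums above are geometric. The step I would check most carefully is the covering claim in the two regimes: that an interval of length $\gtrsim\max(2^{-2k},\ell(y))$ containing $y+I_k$ always contains a point where $MG\le\lambda^{1/2}$. This needs the Whitney constants and $A$ to be chosen compatibly. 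I also need to check that the constants depend only on $c_0,C_0,\delta$; the number of $k$ with $2^{-k}\le\delta$ enters only through geometric sums, so this should hold.
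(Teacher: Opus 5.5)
Your argument is correct, and it takes a genuinely different route from the paper's.

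\textbf{The paper's route.} It performs a dyadic Calder\'on--Zygmund decomposition of \emph{both} inputs at height $\alpha\sim\lambda^{1/2}$. The good--good term is handled by an $L^\infty$ bound, and the two mixed terms by $L^1$ bounds. The bad--bad term is treated pair by pair: the change of variables $(x,t)\mapsto(u,v)$ gives $\int_{\Omega^c}T(F_I,G_J)\lesssim a_Ib_J/\max\{L_{IJ},\sqrt{d_{IJ}}\}$. These bounds are then summed, using a packing argument for nearby pairs and convolution against $|s|^{-1/2}$ for separated pairs.

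\textbf{Your route.} You decompose only $G$, via $E=\{MG>\lambda^{1/2}\}$ and its Whitney intervals, and you truncate the $t$-integral at the Whitney scale of the $F$-point $y=x+\phi(t)$. The local part is controlled by its support, because the spatial spread $|\phi(t)|\le C_0|t|$ and the cutoff live at the same scale $\ell(y)$. The far part is controlled by the pointwise bound $\sup_y\int_{|t|>A\ell(y)}G(y+\psi(t))\,dt\lesssim\lambda^{1/2}$. This holds because the Jacobian bound $|\psi'(t)|\ge c_0|t|$ is also linear in $|t|$. Consequently the pushforward of $\1_{\{|t|>A\ell(y)\}}\,dt$ under $\psi$ has a decreasing density bounded by $C\min\{s^{-1/2},(A\ell(y))^{-1}\}$. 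Meanwhile, $G$ has averages $\lesssim\lambda^{1/2}$ on every interval of length $\gtrsim\ell(y)$ within distance $\lesssim \ell(y)$ of $y$. I checked both covering claims and both geometric sums, and they hold as you state them.

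\textbf{What each buys.} Your proof is shorter and needs no interval-pair summation. Moreover, $F$ enters only through $\norm{F}_1$. So you in effect prove the asymmetric estimate $\int_{\Omega^c}T(F,G)\lesssim\beta\norm{F}_1$ with $|\Omega|\lesssim\norm{G}_1/\beta$, and then optimize in $\beta$. The paper's proof is symmetric in $F$ and $G$. It uses only the dyadic stopping-time decomposition, with no maximal theorem or Whitney intervals. It also makes explicit the bilinear interaction between two concentrated pieces.

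\textbf{On your compatibility concern.} The worry about choosing $A$ and the Whitney constants compatibly is unfounded: any fixed $A>0$ works. The covering step uses only $\dist(y,E^c)\lesssim\ell(y)$ for $y\in E$, which is the Whitney property. The constant $A$ enters only through the dilation factor $1+2C_0A$ in the local part and the factor $1/A$ in the second regime.
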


\begin{proof}
If either input has zero norm, the assertion follows from Tonelli's
theorem. By dividing the inputs by their respective norms and
rescaling $\lambda$, it suffices to assume
\begin{equation}\label{eq:normalization}
 \norm{F}_1=\norm{G}_1=1.
\end{equation}
Fix $\lambda>0$ and set
\begin{equation}\label{eq:alpha}
 \alpha=(8\delta)^{-1/2}\lambda^{1/2}.
\end{equation}

\smallskip\noindent
\emph{Dyadic decomposition.}
We apply the dyadic Calder\'on--Zygmund
decomposition~\cite[Chapter~I]{Stein} while keeping all pieces nonnegative.
Let $\mathcal I$ be the maximal dyadic intervals on which the average
of $F$ exceeds $\alpha$. Such maximal intervals exist because the
average over an interval of length tending to infinity tends to zero.
They are pairwise disjoint. Set
\[
 a_I=\int_I F(u)\,du,\qquad I\in\mathcal I,
\]
and note that the average over the dyadic parent of $I$ is at most
$\alpha$. Hence
\begin{equation}\label{eq:CZ-masses}
 \alpha\ell(I)<a_I\leq2\alpha\ell(I),\qquad
 \sum_{I\in\mathcal I}a_I\leq1,\qquad
 \sum_{I\in\mathcal I}\ell(I)\leq\alpha^{-1}.
\end{equation}
Write
\[
 F=F_0+F_b,\qquad
 F_0=F\1_{\R\setminus\bigcup\mathcal I},\qquad
 F_b=\sum_{I\in\mathcal I}F_I,\qquad F_I=F\1_I.
\]
Dyadic differentiation gives $0\leq F_0\leq\alpha$ almost everywhere.
Applying the same construction to $G$ gives a disjoint family
$\mathcal J$, functions $G_0,G_b,G_J$, and masses $b_J=\int_JG$
satisfying the analogues of \eqref{eq:CZ-masses}. Set
\begin{equation}\label{eq:exceptional}
 \Omega=\bigcup_{I\in\mathcal I}3I
       \,\cup\!\bigcup_{J\in\mathcal J}3J.
 \qquad\text{Then}\qquad |\Omega|\leq6\alpha^{-1}.
\end{equation}
After changing $F_0,G_0$ on null sets, we may assume that both are
bounded by $\alpha$ everywhere.

It follows that
\begin{equation}\label{eq:good-good}
 T(F_0,G_0)(x)\leq2\delta\alpha^2=\lambda/4.
\end{equation}
Translation invariance and Tonelli's theorem give
\begin{align}
 \norm{T(F_0,G_b)}_1
 &\leq\alpha\int_{-\delta}^{\delta}
       \int_{\R}G_b(x+\chi(t))\,dx\,dt
 \leq2\delta\alpha,\label{eq:mixed}\\
 \norm{T(F_b,G_0)}_1&\leq2\delta\alpha.\notag
\end{align}
It remains to prove
\begin{equation}\label{eq:bad-bad}
 \int_{\Omega^c}T(F_b,G_b)(x)\,dx\leq C\alpha.
\end{equation}

\smallskip\noindent
\emph{Estimate for a pair of intervals.}
For $I\in\mathcal I$ and $J\in\mathcal J$, write
\[
 d_{IJ}=\dist(I,J),\qquad
 L_{IJ}=\max\{\ell(I),\ell(J)\},\qquad
 D=\max_{|t|\leq\delta}\psi(t)>0.
\]
Consider
\begin{equation}\label{eq:change}
 u=x+\phi(t),\qquad v=x+\chi(t).
\end{equation}
The Jacobian determinant is $\psi'(t)$. By
\eqref{eq:fold-derivative}, $\psi'$ does not vanish on
$(-\delta,0)$ or $(0,\delta)$ and has constant sign on each interval.
The map is therefore one-to-one on each branch: $v-u=\psi(t)$
determines $t$, and then $x=u-\phi(t)$.

If $x\notin\Omega$, $u\in I$, and $v\in J$, then
\[
 |u-x|\geq\ell(I),\qquad |v-x|\geq\ell(J).
\]
Indeed, $x$ lies outside both $3I$ and $3J$.
Together with \eqref{eq:fold-linear}, these inequalities imply
$|t|\geq L_{IJ}/C_0$. Moreover,
\[
 d_{IJ}\leq|v-u|=\psi(t)\leq C_0t^2,
\]
so $|t|\geq(d_{IJ}/C_0)^{1/2}$. Consequently,
\begin{equation}\label{eq:pair}
 \int_{\Omega^c}T(F_I,G_J)(x)\,dx
 \leq C\frac{a_Ib_J}{\max\{L_{IJ},\sqrt{d_{IJ}}\}}.
\end{equation}
For completeness, apply the change of variables separately on
$(-\delta,-\varepsilon)$ and $(\varepsilon,\delta)$. On the relevant
part of each branch,
\[
 \frac1{|\psi'(t)|}
 \leq\frac{C}{\max\{L_{IJ},\sqrt{d_{IJ}}\}}.
\]
The image lies in $I\times J$, and the change of variables has
multiplicity at most two.
Integration, followed by monotone convergence as
$\varepsilon\downarrow0$, proves \eqref{eq:pair}.
If $d_{IJ}>D$, the integral on the left vanishes.

\smallskip\noindent
\emph{Summation over interval pairs.}
By positivity and \eqref{eq:pair}, it suffices to show
\begin{equation}\label{eq:sum-goal}
 \sum_{\substack{I\in\mathcal I,\ J\in\mathcal J\\d_{IJ}\leq D}}
 \frac{a_Ib_J}{\max\{L_{IJ},\sqrt{d_{IJ}}\}}
 \leq C\alpha.
\end{equation}
First consider pairs with $d_{IJ}<L_{IJ}$ and
$\ell(J)\leq\ell(I)$. Every such $J$ is contained in $5I$:
its distance from $I$ is less than $\ell(I)$, and its length is at
most $\ell(I)$. Since the intervals of $\mathcal J$ are disjoint,
\[
 \sum_{\substack{J\in\mathcal J\\
           \ell(J)\leq\ell(I),\ d_{IJ}<\ell(I)}}b_J
 \leq2\alpha\sum_{J\subset5I}\ell(J)
 \leq10\alpha\ell(I).
\]
The denominator in \eqref{eq:sum-goal} is at least $\ell(I)$,
so the contribution of these pairs is bounded by
$10\alpha\sum_Ia_I\leq10\alpha$.
Interchanging $I$ and $J$ bounds the remaining pairs with
$d_{IJ}<L_{IJ}$ by the same quantity.

For the pairs with $L_{IJ}\leq d_{IJ}\leq D$, define
\[
 \overline F=\sum_{I\in\mathcal I}\frac{a_I}{\ell(I)}\1_I,
 \qquad
 \overline G=\sum_{J\in\mathcal J}\frac{b_J}{\ell(J)}\1_J.
\]
These functions satisfy
\begin{equation}\label{eq:bar-bounds}
 \norm{\overline F}_1,\norm{\overline G}_1\leq1,
 \qquad
 \norm{\overline F}_\infty,\norm{\overline G}_\infty\leq2\alpha.
\end{equation}
For each such pair and all $u\in I$, $v\in J$,
\[
 d_{IJ}\leq|u-v|
 \leq d_{IJ}+\ell(I)+\ell(J)\leq3d_{IJ}\leq3D.
\]
Let
\[
 K(s)=|s|^{-1/2}\1_{\{0<|s|\leq3D\}}(s).
 \qquad\text{Then}\qquad \norm{K}_1=4\sqrt{3D}.
\]
It follows that
\[
 \frac{a_Ib_J}{\sqrt{d_{IJ}}}
 \leq\sqrt3\int_I\int_J
 \overline F(u)\overline G(v)K(u-v)\,dv\,du.
\]
The rectangles $I\times J$ are pairwise disjoint up to boundaries.
Summing and using \eqref{eq:bar-bounds} yields
\begin{align*}
 \sum_{L_{IJ}\leq d_{IJ}\leq D}
 \frac{a_Ib_J}{\max\{L_{IJ},\sqrt{d_{IJ}}\}}
 &\leq\sqrt3\int_{\R}\int_{\R}
       \overline F(u)\overline G(v)K(u-v)\,dv\,du\\
 &\leq\sqrt3\norm{\overline F}_1
       \norm{\overline G}_\infty\norm{K}_1
 \leq C\alpha.
\end{align*}
This proves \eqref{eq:sum-goal} and hence \eqref{eq:bad-bad}.

Finally, expand $T(F,G)$ into its four nonnegative terms.
By \eqref{eq:good-good}, if $T(F,G)>\lambda$, the sum of the
other three terms exceeds $3\lambda/4$. Applying Chebyshev's
inequality on $\Omega^c$ and using
\eqref{eq:exceptional}, \eqref{eq:mixed}, and \eqref{eq:bad-bad},
we obtain
\[
 |\{T(F,G)>\lambda\}|
 \leq\frac6\alpha+\frac{4}{3\lambda}(4\delta\alpha+C\alpha)
 \leq C\lambda^{-1/2}.
\]
Rescaling the inputs proves \eqref{eq:fold-weak}.
\end{proof}

\section{The weak estimate for the circle}\label{sec:weak}

\begin{proof}[Proof of \eqref{eq:weak-distribution}]
It suffices to take $f,g\geq0$. Fix $\delta=1/10$ and let
\[
 \theta_0=3\pi/4,\qquad\theta_1=7\pi/4,\qquad
 U_j=(\theta_j-\delta,\theta_j+\delta)\quad(j=0,1).
\]
Let $\A_j$ denote the integral in \eqref{eq:operator} over $U_j$
for $j=0,1$, and let $\A_{\mathrm{reg}}$ denote the integral over
$E=[0,2\pi)\setminus(U_0\cup U_1)$.

For $a=1/\sqrt2$ and $\theta=\theta_0+t$, set
\begin{equation}\label{eq:circle-phi}
 \phi(t)=a(\cos t+\sin t-1),\qquad
 \psi(t)=2a(1-\cos t),\qquad
 \chi(t)=\phi(t)+\psi(t).
\end{equation}
The identities
\[
 -\cos(\theta_0+t)=a+\phi(t),\qquad
 -\sin(\theta_0+t)=-a+\chi(t)
\]
show that
\[
 \A_0(f,g)(x)
 =\int_{-\delta}^{\delta}F(x+\phi(t))G(x+\chi(t))\,dt,
\]
where $F(s)=f(s+a)$ and $G(s)=g(s-a)$.
For $|t|\leq\delta$, elementary trigonometric estimates give
\begin{equation}\label{eq:circle-geometry}
 |\phi(t)|+|\chi(t)|\leq4|t|,\qquad
 \frac a2t^2\leq\psi(t)\leq at^2,\qquad
 a|t|\leq|\psi'(t)|\leq2a|t|.
\end{equation}
Indeed, $|\sin t|\leq|t|$, $1-\cos t\leq t^2/2$,
$1-\cos t\geq t^2/4$, and $|\sin t|\geq|t|/2$ on this
interval. Lemma~\ref{lem:fold} and translation invariance of the
$L^1$ norm therefore imply
\begin{equation}\label{eq:arc-weak}
 |\{\A_0(f,g)>\lambda\}|
 \leq C\lambda^{-1/2}\norm{f}_1^{1/2}\norm{g}_1^{1/2}.
\end{equation}
For $f^\vee(s)=f(-s)$ and $g^\vee(s)=g(-s)$, the identity
\[
 \A_1(f,g)(x)=\A_0(f^\vee,g^\vee)(-x)
\]
gives the same estimate for $\A_1$.

We next prove
\begin{equation}\label{eq:regular-strong}
 \norm{\A_{\mathrm{reg}}(f,g)}_{L^{1/2}}
 \leq C\norm{f}_1\norm{g}_1.
\end{equation}
On $E$,
\[
 |\cos\theta+\sin\theta|\geq\sqrt2\sin\delta>0.
\]
The map
\[
 (x,\theta)\longmapsto(u,v)
 =(x-\cos\theta,x-\sin\theta)
\]
has Jacobian determinant $-\cos\theta-\sin\theta$ and
multiplicity at most two: $v-u=\cos\theta-\sin\theta$ has at
most two solutions in $[0,2\pi)$, and $x$ is then determined.
For $Q_k=[k,k+1)$ and $I_k=[k-1,k+2]$, both $u$ and $v$ belong
to $I_k$ whenever $x\in Q_k$. The change of variables, applied
on the finitely many monotonicity intervals, gives
\begin{equation}\label{eq:regular-local}
 \int_{Q_k}\A_{\mathrm{reg}}(f,g)(x)\,dx
 \leq C\left(\int_{I_k}f\right)\left(\int_{I_k}g\right).
\end{equation}
Write $f_k=\int_{I_k}f$ and $g_k=\int_{I_k}g$.
Applying Cauchy--Schwarz first on $Q_k$ and then in $k$, we obtain
\begin{align*}
 \int_{\R}\A_{\mathrm{reg}}(f,g)(x)^{1/2}\,dx
 &\leq C\sum_{k\in\mathbb Z}\sqrt{f_kg_k}\\
 &\leq C\left(\sum_kf_k\right)^{1/2}
            \left(\sum_kg_k\right)^{1/2}
 \leq C\norm{f}_1^{1/2}\norm{g}_1^{1/2}.
\end{align*}
The last inequality follows from the overlap bound
$\sum_k\1_{I_k}\leq4$. Squaring proves
\eqref{eq:regular-strong}. Chebyshev's inequality applied to
$\A_{\mathrm{reg}}^{1/2}$ gives its weak estimate.

Finally,
\[
 \{\A(f,g)>\lambda\}
 \subseteq\{\A_0(f,g)>\lambda/3\}
 \cup\{\A_1(f,g)>\lambda/3\}
 \cup\{\A_{\mathrm{reg}}(f,g)>\lambda/3\}.
\]
Combining the estimates for the three pieces proves
\eqref{eq:weak-distribution}. For general $f,g$, apply the result
to $|f|,|g|$.
The measure of $\{\A(|f|,|g|)>n\}$ tends to zero as
$n\to\infty$, so $\A(|f|,|g|)$ is finite almost everywhere.
Absolute convergence and
$|\A(f,g)|\leq\A(|f|,|g|)$ then give \eqref{eq:weak-norm}.
\end{proof}

\section{Failure of the strong endpoint}\label{sec:strong}

\begin{proof}[Proof of the remaining assertions of Theorem~\ref{thm:main}]
Let $a=1/\sqrt2$ and define
\begin{equation}\label{eq:h}
 h(s)=\frac{\1_{(0,e^{-4})}(s)}{s(\log(1/s))^{3/2}},
 \qquad f(u)=h(u-a),\qquad g(v)=h(v+a).
\end{equation}
Then
\begin{equation}\label{eq:h-norm}
 \norm{f}_1=\norm{g}_1=\int_0^{e^{-4}}
 \frac{ds}{s(\log(1/s))^{3/2}}
 =\int_4^\infty w^{-3/2}\,dw=1.
\end{equation}

Take $0<r<e^{-10}$ and $x=-r$. Restrict the angular integral to
$\theta=3\pi/4+t$ with $0<t<1/10$.
With $\phi$ and $\psi$ as in \eqref{eq:circle-phi},
\begin{equation}\label{eq:counter-arguments}
 f(-r-\cos\theta)=h(\phi(t)-r),\qquad
 g(-r-\sin\theta)=h(\phi(t)-r+\psi(t)).
\end{equation}
For $0<t<1/10$,
\begin{equation}\label{eq:phi-positive}
 \frac a2\leq\phi'(t)=a(\cos t-\sin t)\leq a,
 \qquad \frac a2t\leq\phi(t)\leq at.
\end{equation}
The lower bound for $\phi'$ follows from $\cos t\geq1-t^2/2$
and $\sin t\leq t$, while the upper bound follows from
$\cos t-\sin t\leq1$. Integrating gives the bounds for $\phi$.

Set $y=\phi(t)-r$ and restrict to $0<y<r^2$.
This restriction is valid since $\phi(1/10)\geq a/20$ and
$r+r^2<2e^{-10}<1/40<a/20$.
Since $\phi$ is strictly increasing, the inverse $t=t(y,r)$ is
well defined. Moreover,
\eqref{eq:phi-positive} implies
\begin{equation}\label{eq:t-range}
 \frac r a<t<\frac{4r}{a},\qquad dt\geq\frac{dy}{a}.
\end{equation}
Using \eqref{eq:circle-geometry} and putting
$z=y+\psi(t(y,r))$, we obtain
\begin{equation}\label{eq:z-range}
 ar^2=\frac{r^2}{2a}\leq z
 \leq\left(1+\frac{16}{a}\right)r^2<24r^2.
\end{equation}
Here $a^2=1/2$ and $\sqrt2<23/16$ justify the last bound.
In particular, $0<z<e^{-4}$, since $24e^{-20}<e^{-4}$.

Write $L=\log(1/r)>10$. Since $z\geq ar^2$,
\[
 \log(1/z)\leq2L+\log(1/a)\leq3L.
\]
Using $z\leq24r^2$ and $\log(1/z)\leq3L$ in \eqref{eq:h} gives
\begin{equation}\label{eq:h-z}
 h(z)\geq\frac{1}{24\cdot3^{3/2}\,r^2L^{3/2}}.
\end{equation}
Also $r^2<e^{-4}$ and
\begin{equation}\label{eq:h-tail}
 \int_0^{r^2}h(y)\,dy
 =\int_{2L}^\infty w^{-3/2}\,dw
 =\sqrt2\,L^{-1/2}.
\end{equation}
Combining \eqref{eq:counter-arguments}--\eqref{eq:h-tail} yields
\begin{align}
 \A(f,g)(-r)
 &\geq\frac1a\int_0^{r^2}h(y)h(y+\psi(t(y,r)))\,dy\notag\\
 &\geq\frac{1}{36\sqrt3}\,
       \frac{1}{r^2(\log(1/r))^2}.
       \label{eq:counter-lower}
\end{align}
Consequently,
\[
 \int_{\R}\A(f,g)(x)^{1/2}\,dx
 \geq\frac1{6\,3^{1/4}}
       \int_0^{e^{-10}}\frac{dr}{r\log(1/r)}=\infty.
\]
The last integral diverges after the substitution $w=\log(1/r)$,
which turns it into $\int_{10}^\infty dw/w$.
This proves \eqref{eq:strong-infinite}.

To obtain smooth inputs, choose a nondecreasing
$\eta\in C^\infty(\R)$ with $0\leq\eta\leq1$,
$\eta(s)=0$ for $s\leq1$, and $\eta(s)=1$ for $s\geq2$.
For integers $n$ large enough that $4/n<e^{-4}$, define
\[
 h_n(s)=
 \begin{cases}
 \displaystyle
 \frac{\eta(ns)\eta(n(e^{-4}-s))}
 {s(\log(1/s))^{3/2}},&0<s<e^{-4},\\[6pt]
 0,&\text{otherwise}.
 \end{cases}
\]
Then $h_n\in C_c^\infty(\R)$, $0\leq h_n\uparrow h$
pointwise, and $m_n:=\norm{h_n}_1\uparrow1$, with $m_n>0$.
By monotone convergence, first in $\theta$ and then in $x$,
\[
 \int_{\R}
 \A\bigl(h_n(\,\cdot-a),h_n(\,\cdot+a)\bigr)(x)^{1/2}\,dx
 \longrightarrow\infty.
\]
Set $f_n=m_n^{-1}h_n(\,\cdot-a)$ and
$g_n=m_n^{-1}h_n(\,\cdot+a)$. Then $f_n,g_n$ are nonnegative
functions in $C_c^\infty(\R)$ with $L^1$ norm one, and
$\norm{\A(f_n,g_n)}_{L^{1/2}}\to\infty$.
\end{proof}

\enlargethispage{2\baselineskip}

\end{document}